\newcommand{\trace}[1]{\ensuremath{\operatorname{trace}(#1)} }
\newcommand{\rank}[1]{\ensuremath{\operatorname{rank}}(#1) }
\newcommand{\R}{\ensuremath{\mathbb{R}} }
 \newtheorem{coro}{\textbf{Corollary}}
 \newtheorem{lem}{\textbf{Lemma}}
 \newtheorem{thm}{\textbf{Theorem}}
\newenvironment{proof}{\paragraph{Proof:}}{\hfill\ensuremath{\square}}
\title{A novel representation of rank constraints for
  non-square real matrices}
\author{Ram\'on A. Delgado, Juan C. Ag\"uero and Graham C. Goodwin\\ 
School of Electrical Engineering and Computer Science,\\
 The University of Newcastle, Australia} 
\date{\mydate\today}
\begin{document}

\maketitle 

\begin{abstract}
We present a novel representation of rank
constraints for non-square real matrices. We establish
relationships with some existing results, which are particular
cases of our representation. One of these
particular cases, is a representation of  the $\ell_0$ pseudo-norm,
which is used in sparse
representation problems. Finally, we describe how our representation
can be included  in
rank-constrained optimization and in rank-minimization problems. 
\end{abstract}

\textbf{Notation and basic definitions:}  $\rank{A}$ denotes the rank of a
matrix $A$. We denote by  
   $A^\dagger$ to the Moore-Penrose pseudoinverse of $A$. $\lambda_i(A)$ denotes the i-th largest eigenvalue of a
symmetric matrix $A$,  $A \succeq 0$ denotes that $A$  is positive
semidefinite, and $A \succeq B$ denotes that $A -B \succeq 0$. We
represent the transpose of a given matrix $A$ as
$A^\top$. $\mathbb{S}^n$ denotes the set of symmetric matrices of size $n\times
n$, and $\mathbb{S}^n_+$ the set of symmetric positive semidefinite
matrices,
i.e. $\mathbb{S}^n_+\coloneqq\{A\in\mathbb{S}^n|A\succeq0\}$. $\|\cdot\|_F$ denotes the
Frobenius norm. $\|\cdot\|_0$ denotes the $\ell_0$ pseudo-norm that
counts the number of nonzero elements of a vector.
\section{Main Result}

\begin{thm}
\label{RC:thm:WG0_gen}
Let  $G\in\mathbb{R}^{m\times n}$, then the following expressions are equivalent
\begin{description}
\item[(i)] $\rank{G}\leq r$
\item[(ii)] $\exists W_R\in\Phi_{n,r}$, such that
  $GW_R=0_{m\times n}$
\item[(iii)] $\exists  W_L\in\Phi_{m,r}$, such that $W_LG=0_{m\times n}$
\end{description}
where
\begin{align} 
 \Phi_{n,r}=\{W\in\mathbb{S}^n,\;0\preceq W\preceq I, \trace{W}=n-r\}
\end{align}  
\end{thm}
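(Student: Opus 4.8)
The plan is to establish the chain (i) $\Rightarrow$ (ii) $\Rightarrow$ (i), and then to deduce (i) $\Leftrightarrow$ (iii) for free: since $W_L$ is symmetric, transposing $W_LG=0$ gives $G^\top W_L=0$, so statement (iii) for $G$ is exactly statement (ii) applied to $G^\top$ (with the roles of $m$ and $n$ interchanged), and $\rank{G}=\rank{G^\top}$. Throughout I would take $0\le r\le\min(m,n)$, the only range in which $\Phi_{n,r}$ and $\Phi_{m,r}$ are nonempty; the degenerate values ($r=0$, which forces $W_R=I$ and hence $G=0$; $r=n$ or $r=m$, which forces the relevant $W$ to be $0$) are covered by the same argument.

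For (i) $\Rightarrow$ (ii): if $\rank{G}\le r$ then $\dim\ker G=n-\rank{G}\ge n-r$, so I can choose an $(n-r)$-dimensional subspace $\mathcal V\subseteq\ker G$ together with an $n\times(n-r)$ matrix $V$ whose columns form an orthonormal basis of $\mathcal V$. Take $W_R=VV^\top$, the orthogonal projector onto $\mathcal V$. Then $W_R\in\mathbb{S}^n$; it is idempotent, so its eigenvalues lie in $\{0,1\}$ and hence $0\preceq W_R\preceq I$; and $\trace{W_R}=\trace{V^\top V}=\trace{I_{n-r}}=n-r$, so $W_R\in\Phi_{n,r}$. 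Finally $GW_R=(GV)V^\top=0$ because every column of $V$ lies in $\ker G$.

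For (ii) $\Rightarrow$ (i): given $W_R\in\Phi_{n,r}$ with $GW_R=0$, I would first observe that the constraints $0\preceq W_R\preceq I$ and $\trace{W_R}=n-r$ force $\rank{W_R}\ge n-r$ — if $W_R$ had only $k$ nonzero eigenvalues, then, each being at most $1$, $\trace{W_R}\le k$, whence $k\ge n-r$. On the other hand $GW_R=0$ says that the columns of $W_R$ lie in $\ker G$, i.e.\ $\operatorname{range}(W_R)\subseteq\ker G$, so $\dim\ker G\ge\rank{W_R}\ge n-r$ and therefore $\rank{G}=n-\dim\ker G\le r$. (Equivalently, using the positive semidefinite square root, $GW_RG^\top=(GW_R^{1/2})(GW_R^{1/2})^\top=0$ forces $GW_R^{1/2}=0$, which makes the inclusion $\operatorname{range}(W_R)\subseteq\ker G$ transparent.) The whole argument is elementary linear algebra; the one step I expect to require care is precisely this last implication, where it is the combination of the spectral bound $W_R\preceq I$ with the trace equality that rules out a low-rank $W_R$ whose range would fail to exhaust enough of $\ker G$.
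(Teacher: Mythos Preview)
Your proof is correct and follows the same overall plan as the paper: construct an orthogonal projector onto an $(n-r)$-dimensional subspace of $\ker G$ for (i)$\Rightarrow$(ii), and use the key observation $\trace{W}\le\rank{W}$ for $0\preceq W\preceq I$ in the converse direction. Two small differences are worth noting. For (ii)$\Rightarrow$(i), the paper invokes Sylvester's inequality $\rank{G}+\rank{W_R}\le n+\rank{GW_R}$ and then substitutes $\trace{W_R}\le\rank{W_R}$; you instead argue directly that $GW_R=0$ forces $\operatorname{range}(W_R)\subseteq\ker G$, whence $\dim\ker G\ge\rank{W_R}\ge n-r$. This is the same inequality reached without the named lemma, and is arguably cleaner. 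Second, the paper handles (iii) by repeating the argument; your reduction to (ii) for $G^\top$ via transposition is a tidy shortcut. Your remark on the admissible range $0\le r\le\min(m,n)$ is also a useful clarification that the paper omits.
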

\begin{proof}
Here we provide a sketch of the proof. A more detailed proof will be
published somewhere else.

 We first prove \textbf{(i)}$\implies$\textbf{(ii)}. Let  $\rank{G}\leq
r$ then there exists at least $n-r$ linearly independent vectors
$u_i\in\R^n$ such that $Gu_i=0$. Define
$U=[u_1,\dots,u_{n-r}]\in\R^{n\times (n-r)}$ having full column
rank. Then we can construct a orthogonal projector, $W_R=UU^\dagger$
which satisfies the condition $\rank{W_R}=n-r$ and is such that $GW_R=0$ . Since $W_R$ is
an orthogonal projector it also satisfies  $W_R\in\mathbb{S}^n$, $0\preceq
W_R\preceq  I_n$ and $\rank{W_R}=\trace{W_R}=n-r$,
i.e. $W_R\in\Phi_{n,r}$. 

The procedure to prove \textbf{(i)}$\implies$\textbf{(iii)} is similar
to the proof \textbf{(i)}$\implies$\textbf{(ii)}.

Next, we prove \textbf{(ii)}$\implies$\textbf{(i)}.
For all $W_R\in\mathbb{S}^n$ such that $0 \preceq W_R \preceq I$, it is true
that
\begin{align}
\label{RC:rc:eq:rank_ineq_W}
\trace{W_R}\leq \rank{W_R}
\end{align}
On the other hand, by using  \emph{Sylvester's Inequality} (see
e.g. \cite[Proposition 2.5.9]{bernstein2009matrix}), we have that
\begin{align}
  \rank{G}+\rank{W_R}\leq n+\rank{GW_R}
\end{align}
Then, by using \eqref{RC:rc:eq:rank_ineq_W}, we have 
\begin{align}
\rank{G}+\trace{W_R}\leq n+\rank{GW_R}
\end{align}
Then by using the fact that $\rank{GW_R}=\rank{0_{m\times n}}=0$ we obtain
\begin{align}
  \rank{G}\leq n-\trace{W_R}
\end{align}
Since $W_R\in\Phi_{n,r}$, we have that $\trace{W_R}=n-r$. Then 
\begin{align}
  \rank{G}\leq r
\end{align}
This completes the proof that \textbf{(ii)}$\implies$\textbf{(i)}. The procedure to prove \textbf{(iii)}$\implies$\textbf{(i)} is similar
to the proof \textbf{(ii)}$\implies$\textbf{(i)}.
\end{proof}

To the best of the authors' knowledge, Theorem
\ref{RC:thm:WG0_gen} is novel. The closest results in rank-constrained optimization, is described in
\cite{markovsky2014recent,markovsky2012low} where the rank-nullity
theorem is used to establish that, for a matrix $G\in\R^{m\times n}$,
\begin{align}
\rank{G}\leq r \iff \mbox{$\exists$ a full row rank matrix
  $U\in\R^{(m-r)\times m}$ such that $UG=0$}
\end{align}  
However, requiring that  $U$ is full row rank  is not
easy. For
example, it may
lead to the necessity of including additional non-convex constraints, such as $UU^\top=I_{m-r}$. 

Another closely related result is described in \cite[\S
4.4]{dattorro2005convex}. The latter result make use of the convex set
$\Phi_{n,r}$, but the formalism is valid only for positive
semidefinite matrices. The above result establishes that for a matrix
$G\in\mathbb{S}^n_+$, 
\begin{align}
\label{eq:rank_dattorro}
\rank{G}\leq r \iff \exists W\in\Phi_{n,r} \mbox{ such that } \trace{WG}=0.
\end{align}
Notice that  Theorem
\ref{RC:thm:WG0_gen} can be seen as a generalisation of
\eqref{eq:rank_dattorro}. 

There exist other rank-constraint representations which impose conditions on the coefficients of the characteristic
polynomial of the matrix, see
\cite{daspremont2003asemidefinite,helmersson2009polynomial}. These representations are valid only for positive semidefinite 
matrices.

Notice that one of the key steps in proving Theorem
\ref{RC:thm:WG0_gen} is the observation that  for all $W\in\mathbb{S}^n$ such that $0
\preceq W \preceq I$, it is true that $\trace{W}\leq\rank{W}$. This
fact is a consequence of a stronger result that says that in the set
of interest, $\{W\in\mathbb{S}^n|0\preceq W \preceq I\}$, 
the trace function is the largest convex function that is less than or
equal to the rank function. This latter result is one of the key
underlying ingredients in the development of the nuclear norm
heuristic \cite{fazel2001arank}.

In the remainder of this section we establish connection between
Theorem \ref{RC:thm:WG0_gen} and other existing results. 
 The following lemma establishes
the relationship between Theorem \ref{RC:thm:WG0_gen} and the
rank-constraint representation in \eqref{eq:rank_dattorro}. 
\begin{lem}
\label{coro:traceWG0}
Let $G\in\mathbb{S}^n_+$ and $W\in\mathbb{S}^n_+$, then  
\begin{align}
\trace{WG}=0 \iff WG=0 
\end{align}  
\end{lem}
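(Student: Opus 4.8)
The plan is to prove the nontrivial direction $\trace{WG}=0\implies WG=0$, since the converse is immediate. The key idea is that both $W$ and $G$ are positive semidefinite, so we may write them in factored form $W=P^\top P$ and $G=Q^\top Q$ for suitable real matrices $P,Q$ (for instance via their symmetric positive semidefinite square roots, $P=W^{1/2}$ and $Q=G^{1/2}$). Then $\trace{WG}=\trace{P^\top P Q^\top Q}=\trace{(PQ^\top)^\top(PQ^\top)}=\|PQ^\top\|_F^2$, using the cyclic property of the trace. Hence $\trace{WG}=0$ forces $PQ^\top=0$, i.e. $W^{1/2}G^{1/2}=0$.

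From $W^{1/2}G^{1/2}=0$ I would conclude $WG=0$ by multiplying on the left by $W^{1/2}$ and on the right by $G^{1/2}$: this gives $W^{1/2}(W^{1/2}G^{1/2})G^{1/2}=W G=0$, using $W^{1/2}W^{1/2}=W$ and $G^{1/2}G^{1/2}=G$. So the whole argument is quite short once the factorization is in place.

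The only step requiring a little care is the existence of the symmetric positive semidefinite square root of a matrix in $\mathbb{S}^n_+$, and the identity $\|X\|_F^2=\trace{X^\top X}$; both are standard and can be cited from \cite{bernstein2009matrix}. I would therefore present the proof in three short moves: (1) factor $W$ and $G$ through their square roots and rewrite the trace as a Frobenius norm; (2) deduce $W^{1/2}G^{1/2}=0$; (3) multiply back to recover $WG=0$. I do not anticipate a genuine obstacle here — the main thing to get right is simply choosing the factorization that makes the trace manifestly a sum of squares, after which everything follows mechanically.
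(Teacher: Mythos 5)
Your proof is correct and takes essentially the same route as the paper's: factor both positive semidefinite matrices and recognize $\trace{WG}$ as a squared Frobenius norm, which forces the product of the factors to vanish. The only cosmetic differences are that the paper uses Cholesky factors $G=PP^\top$, $W=QQ^\top$ where you use symmetric square roots, and that you spell out explicitly the final multiplication recovering $WG=0$, a step the paper leaves implicit.
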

\begin{proof}
Since $G$ and $W$ are symmetric and positive semidefinite, then by the \emph{Cholesky
  decomposition}, see e.g. \cite[Fact 8.9.37]{bernstein2009matrix},
there exist matrices $P\in\R^{n\times n}$ and $Q\in\R^{n\times n}$
such that
\begin{align}
G&=PP^\top\\
W&=QQ^\top
\end{align}
We then have that
\begin{align}
\trace{WG}&=\trace{ QQ^\top PP^\top}\\
&=\trace{Q^\top P P^\top Q} \label{RC:eq:tracefrobenius}
\end{align} 
Next, we recall that for $A\in\R^{m \times n}$ the \emph{Frobenius
  norm} is defined by $\|A\|_F=\sqrt{\trace{A^\top A}}$, see
e.g. \cite[page 547]{bernstein2009matrix}. Then, we have 
\begin{align}
\trace{WG}=\trace{Q^\top P P^\top Q}=\|P^\top Q\|_F^2
\end{align}
and from the definition of a norm we have that $\|A\|=0$ if and only
if $A=0$, see e.g. \cite[Definition 9.2.1.]{bernstein2009matrix}. Then
we have that 
\begin{align}
\trace{WG}=\|P^\top Q\|_F^2=0 \implies WG=0
\end{align} 
This concludes the proof for $\trace{WG}=0 \implies WG=0$. The proof for
$WG=0 \implies \trace{WG}=0$ is straightforward. 
\end{proof}

Another particular case of Theorem  \ref{RC:thm:WG0_gen} is the
representation of the $\ell_0$ pseudo norm, denoted as $\|\cdot\|_0$,
which corresponds to the
number of non-zero elements of a vector. The connection is made by
considering a diagonal matrix $G\in\R^{n\times n}$ such that its
diagonal elements are given by a vector $x\in\R^n$,
i.e. $G=\operatorname{diag}\{x\}$ and we have that
$\|x\|_0=\rank{G}$. Then,
Theorem  \ref{RC:thm:WG0_gen}  can be used to prove the following result.   

\begin{coro}
\label{CC:coro:wx0_gen}
Let  $x\in\R^{ n}$, then the following expressions are equivalent
\begin{description}
\item[(i)] $\|x\|_0\leq r$
\item[(ii)] $\exists w\in\{w\in\R^n\vert\;0\leq
  w_i\leq1,i=1,\dots,n;\sum_{i=1}^nw_i=n-r\}$, such that $x_iw_i=0$
  for  $i=1,\dots,n$.
\end{description}
\end{coro}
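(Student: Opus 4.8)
The plan is to reduce this statement directly to Theorem \ref{RC:thm:WG0_gen} by means of the diagonal embedding $G = \operatorname{diag}\{x\}$ suggested in the surrounding text. First I would record the elementary observation that for a diagonal matrix $G = \operatorname{diag}\{x\}$ one has $\rank{G} = \|x\|_0$, since the nonzero diagonal entries are exactly the pivots and the zero diagonal entries contribute zero rows/columns. Hence condition \textbf{(i)} of the corollary, $\|x\|_0 \le r$, is literally condition \textbf{(i)} of the theorem, $\rank{G}\le r$, applied to this $G$.

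Next I would invoke the equivalence \textbf{(i)}$\iff$\textbf{(ii)} of Theorem \ref{RC:thm:WG0_gen}: $\rank{G}\le r$ holds iff there exists $W_R \in \Phi_{n,r}$ with $G W_R = 0_{n\times n}$. The remaining work is to show that, \emph{for this particular diagonal $G$}, the existence of such a general symmetric $W_R$ is equivalent to the existence of a \emph{diagonal} $W_R = \operatorname{diag}\{w\}$ with $w$ in the stated simplex-type set. The ``if'' direction is immediate: if $w \in \{w : 0\le w_i\le 1, \sum w_i = n-r\}$, then $W_R := \operatorname{diag}\{w\}$ is symmetric, satisfies $0 \preceq W_R \preceq I$ (its eigenvalues are the $w_i \in [0,1]$), and has $\trace{W_R} = \sum w_i = n-r$, so $W_R \in \Phi_{n,r}$; moreover $G W_R = \operatorname{diag}\{x_i w_i\}$, which vanishes iff $x_i w_i = 0$ for all $i$. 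For the ``only if'' direction, given an arbitrary $W_R \in \Phi_{n,r}$ with $G W_R = 0$, I would pass to its diagonal part: set $w_i := (W_R)_{ii}$ and $\widetilde W := \operatorname{diag}\{w\}$. Because $0 \preceq W_R \preceq I$ forces every diagonal entry into $[0,1]$, and $\trace{W_R} = n-r$ gives $\sum w_i = n-r$, we get $w$ in the required set. Finally, $G W_R = 0$ with $G = \operatorname{diag}\{x\}$ means $x_i (W_R)_{ij} = 0$ for all $i,j$; taking $j = i$ yields $x_i w_i = 0$ for all $i$, as needed. Thus $\widetilde W$ certifies condition \textbf{(ii)} of the corollary.

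The only mildly delicate point is the ``only if'' direction of the reduction: one must check that replacing a general $W_R$ by its diagonal part preserves membership in the (diagonal analogue of the) set $\Phi_{n,r}$ and preserves the annihilation condition $x_i w_i = 0$. Both follow from the fact that the constraints defining $\Phi_{n,r}$ — symmetry, $0\preceq W \preceq I$, and the trace condition — all restrict, or are implied, on the diagonal, and that $G$ is diagonal so that $(GW_R)_{ii} = x_i (W_R)_{ii}$. I expect this to be routine rather than a genuine obstacle; the argument is essentially a transcription of Theorem \ref{RC:thm:WG0_gen} through the order-isomorphism between diagonal matrices and vectors.
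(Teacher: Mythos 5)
Your proposal is correct and follows essentially the same route as the paper: embed $x$ as $G=\operatorname{diag}\{x\}$, note $\rank{G}=\|x\|_0$, apply Theorem \ref{RC:thm:WG0_gen}, and reduce the general $W\in\Phi_{n,r}$ to a diagonal one. Your way of handling that last reduction (extracting the diagonal part $w_i=(W_R)_{ii}$ and observing that $0\preceq W_R\preceq I$ forces $w_i\in[0,1]$, the trace gives $\sum_i w_i=n-r$, and $(GW_R)_{ii}=x_iw_i=0$) is a slightly more direct justification of the paper's ``without loss of generality $W=\operatorname{diag}\{w\}$'' step, but it is the same argument in substance.
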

\begin{proof}
Consider the following definition $G=\operatorname{diag}\{x\}\in\R^{n\times n}$, i.e.
\begin{align}
  G=\begin{bmatrix}x_1&0&0&\cdots&0\\0&x_2&0&\cdots&0\\\vdots
    &\ddots&\ddots&\ddots&\vdots\\ 0&\cdots&0&x_{n-1}&0\\0&\cdots&0&0&x_n\end{bmatrix}
\end{align}
Notice that from construction $\rank{G}=\|x\|_0$. From Theorem \ref{RC:thm:WG0_gen} we have that $\rank{G}\leq r$, if and only
if, there exist a $W\in \{W\in\mathbb{S}^n\vert\;0\preceq W\preceq
I;\trace{W}=n-r\}$ such that $GW=0$. Since $G$ is diagonal, then without
loss of generality, we can assume that $W=\operatorname{diag}\{w\}$.  This can be
easily seen by defining $C=GW$ and considering that $W$ is
symmetric. Note that, since $G$ is diagonal,
$C_{ij}=G_{ii}W_{ij}$ and $C_{ji}=G_{jj}W_{ji}$. If $G_{ii}=G_{jj}=0$
for $i\neq j$
then $W_{ij}=W_{ji}$ can take any value, including zero, and still satisfy 
$C_{ij}=C_{ji}=0$. If $G_{ii}\neq0$ then $W_{ij}=W_{ji}=0$ in order to
satisfy that $C_{ij}=0$.  Finally,  conditions
on $w$ are directly derived from conditions on $W$.
\end{proof}

We note in passing that this representation of $\ell_0$ constraints is related to the results
reported in
\cite{feng2013complementarity,piga2013sdp,daspremont2003asemidefinite},\cite[\S
4.5]{dattorro2005convex}.

\section{Applications in Optimization}

In this section we apply Theorem \ref{RC:thm:WG0_gen} so as to include rank
constraints into optimization problems. In the last decade there has
been  increasing interest on including the rank matrix function into
optimization problems. This is motivated by the introduction of the
development of the nuclear norm heuristic \cite{fazel2001arank}, which
provides a convex relaxation for rank-minimization problems. The
nuclear norm heuristic have been shown to be particularly useful on
high-dimensional optimization problems. However, has been shown in
\cite{markovsky2012howeffective} that there is an inherent loss of
performance on the nuclear norm heuristic.

Theorem \ref{RC:thm:WG0_gen}  can be applied to rank-constrained
optimization problems by simply replacing the rank contraint by one of
the equivalent representations, as follows        
\begin{align*}
\begin{aligned}
 &\mathcal{P}_{rco}:&\min_{\theta\in\mathbb{R}^p}
\;\;&f(\theta) \\
 &&\text{s.t.}
 \;\;&\theta\in\Omega\\
&&& \rank{G(\theta)}\leq r
\end{aligned}\equiv
\begin{aligned}
 &\mathcal{P}_{rco equiv}:&\min_{\theta\in\mathbb{R}^p}\min_{W\in\mathbb{S}^n}
\;\;&f(\theta) \\
 &&\text{s.t.}
 \;\;&\theta\in\Omega\\
&&& G(\theta) W=0_{m\times n}\\
&&&W\in\Phi_{n,r}
\end{aligned}
\end{align*}

On the other hand,  Theorem \ref{RC:thm:WG0_gen}  can also be applied
to rank-minimization problems by using the epigraph representation
\cite{grant2008graph}, as follows

\begin{align*}
\begin{aligned}
 &\mathcal{P}_{rm}:&\min_{\theta\in\mathbb{R}^p}
\;\;&r \\
 &&\text{s.t.}
 \;\;&\theta\in\Omega\\
&&& \rank{G(\theta)}\leq r
\end{aligned}\equiv
\begin{aligned}
 &\mathcal{P}_{rm equiv}:&\min_{\theta\in\mathbb{R}^p}\min_{W\in\mathbb{S}^n}
\;\;&n-\trace{W} \\
 &&\text{s.t.}
 \;\;&\theta\in\Omega\\
&&& G(\theta) W=0_{m\times n}\\
&&&0\preceq W\preceq I_n
\end{aligned}
\end{align*}

These ideas has been applied by the current authors to rank-constrained optimization problems.
For example, in \cite{aguilera2014quadratic} Corollary
\ref{CC:coro:wx0_gen} has been used to impose $\ell_0$ contraints into
a Model Predictive Control problem. In \cite{delgado2014arank} the
problem of Factor Analysis is considered. In the latter work, Theorem
\ref{RC:thm:WG0_gen} has been used to relax the restrictive assumption that
the noise sequences should be uncorrelated. The  equivalence for the rank-minimization
problem $\mathcal{P}_{rm}$ to the problem $\mathcal{P}_{rm equiv}$ can be seen as a generalisation  to 
non-square matrices  of the
results presented in\cite{daspremont2003asemidefinite}.

 \bibliographystyle{apalike}
 \bibliography{rank_equiv}

\end{document}